\journalname{BIT}
\begin{document}

\title{Scaled Fixed Point Algorithm for Computing the Matrix Square Root
}


\author{Harry Oviedo        \and
        Hugo Lara           \and
        Oscar Dalmau
}


\institute{H. Oviedo \at
              Mathematics Research Center, CIMAT A.C. Guanajuato, Mexico \\
              \email{harry.oviedo@cimat.mx}
           \and
           H. Lara \at
              Universidade Federal de Santa Catarina. Campus Blumenau. Brasil. \\
              \email{hugo.lara.urdaneta@ufsc.br}
           \and
           O. Dalmau \at
              Mathematics Research Center, CIMAT A.C. Guanajuato, Mexico \\
              \email{dalmau@cimat.mx }
}

\date{Received: date / Accepted: date}
\maketitle

\begin{abstract}
This paper addresses the numerical solution of the matrix square root problem. Two fixed point iterations are proposed by rearranging the nonlinear matrix equation $A - X^2 = 0$ and incorporating a positive scaling parameter. The proposals only need to compute one matrix inverse  and at most two matrix multiplications per iteration. A global convergence result is established. The numerical comparisons versus some existing methods from the literature, on several test problems, demonstrate the efficiency and effectiveness of our proposals.
\keywords{Matrix square root \and fixed point algorithm \and matrix iteration \and geometric optimization.}
\subclass{65J15 \and 65F30 \and 65H10 }
\end{abstract}

\section{Introduction}
\label{sec:1}
In this paper, we derive novel fixed point algorithms for numerically approximate the solution of the matrix square root problem. Given a square matrix $A\in\mathbb{R}^{n\times n}$, we address the problem of finding a matrix $X\in\mathbb{C}^{n\times n}$ such that it satisfies the following quadratic system of equations
\begin{equation}
  X^{2} = A. \label{problem}
\end{equation}
Our approach lies on the special case of \eqref{problem} when $A$ is a symmetric positive semi--definite (PSD) matrix with real entries. The matrix square root problem plays an important role in many applications, and arises in several contexts such as: computation of the matrix sign function \cite{Higham}, signal processing applications \cite{PanChen,VanDerMerwe}, parallel translation and polar retractions for optimization on Riemannian manifolds \cite{iannazzo2017riemannian,yuan2016riemannian,zhu2017riemannian}, the Karcher mean computation \cite{iannazzo2017riemannian}, among others.\\

It is well--known that the system \eqref{problem} has no a unique solution (if one exists). However, if $A$ is positive semi--definite then problem \eqref{problem} has a unique solution, denoted by $A^{1/2}$. This seems to be the most frequent case in practice.\\

The most numerically stable way to solve problem \eqref{problem} is via the Schur decomposition. This strategy reduces the problem \eqref{problem} into the computation of the matrix square root of an upper triangular matrix. Specifically, let $A = UTU^{*}$ a Schur decomposition of $A$, where $T$ is upper triangular and $U$ is unitary. Then, observe that $A^{1/2} = UT^{1/2}U^{*}$. A blocked Schur procedure for solving \eqref{problem} is presented in \cite{deadman2012blocked}. When $A\in\mathbb{R}^{n\times n}$ is symmetric, this method is reduced to compute an eigenvalue decomposition of $A$. However, this strategy is impractical for $n$ large. Therefore, there is not other option than to resort to iterative methods.\\

Several types of iterative algorithms have been introduced to address the problem \eqref{problem}. Possibly the first one is the Newton method, developed by Higham in \cite{higham1986newton}, which constructs a sequence of iterates by the following recurrence
\begin{equation}
X_{k+1} = \frac{1}{2}(X_{k} + X_k^{-1}A), \quad \textrm{starting at } X_0 = A. \label{Newton}
\end{equation}
This method enjoys a quadratic convergence rate to $A^{1/2}$ under some assumptions, see \cite{higham1986newton}. However, Newton iteration suffers from instability near the solution, and absence of global convergence. In an attempt to overcome these drawbacks, in \cite{Higham} is introduced a scaled Newton method to approximate the solution of \eqref{problem} via polar decomposition. First it is computed the Cholesky factorization $A = L^{\top}L$, to obtain the square root as $A^{1/2} = UL$, where $U$ is the limit of the sequence $\{U_k\}$ generated by
\begin{equation}
U_{k+1} = \frac{1}{2}\left(\mu_kU_{k} + \frac{1}{\mu_k}U_k^{-\top}\right), \quad \textrm{starting at } U_0 = L, \label{ScaledNewton}
\end{equation}
where $\mu_k>0$ is the scaling parameter.\\

In \cite{Sra}, Sra developed a fixed point iteration, which is related to a non--convex optimization problem.  Starting at $X_0 = \frac{1}{2}(A+I)$, Sra's iteration solve \eqref{problem} by running
\begin{equation}
X_{k+1} =  [(X_k+A)^{-1}+(X_k+I)^{-1}]^{-1}. \label{Sra}
\end{equation}
This iteration was also considered in \cite{Ando}, in the context of geometric mean computation of positive operators, motivated by electrical resistance networks. In \cite{Sra} Sra establishes the linear convergence of \eqref{Sra} to $A^{1/2}$ based on a geometric optimization approach. Specifically, Sra cast the problem \eqref{problem} as the non--convex optimization model
\begin{equation}
\min_{X\succ 0} \mathcal{F}(X) = \delta_S^{2}(X,A) + \delta_S^{2}(X,I), \label{Sra2}
\end{equation}
whose unique solution is the desired square root $X^{*} = A^{1/2}$. Here, $\delta_S^{2}(\cdot,\cdot)$ denotes the $S$--divergence (see \cite{sra2016positive}) defined by
\begin{equation}
\delta_S^{2}(X,Y) = \log \det\left(\frac{X+Y}{2}\right) - \frac{1}{2}\log \det\left(\frac{XY}{2}\right). \label{Sra3}
\end{equation}
The first--order optimality conditions associated to \eqref{Sra2} leads to the following matrix equation,
\begin{equation}
\frac{1}{2}\left(\frac{X+A}{2}\right)^{-1} + \frac{1}{2}\left(\frac{X+I}{2}\right)^{-1} - X^{-1} = 0. \label{Sra4}
\end{equation}
Direct manipulation of this Riccati equation leads to the Sra's iteration \eqref{Sra}.\\

Another  first--order  method was proposed in \cite{jain2015computing}. Namely, the classical steepest descent method (SD) for minimizing the least--square problem
\begin{equation}
\min ||X^2 - A||_F^{2},  \quad s.t. \quad X\succeq 0, \label{GradientIteration}
\end{equation}
related to the problem \eqref{problem}. The main advantage of the steepest descent method to solve \eqref{GradientIteration} over the Newton and Sra methods, is that the SD method does not require computing a matrix inverse per iteration, which makes the SD method an attractive procedure. However, the optimal step--size depends on a certain constant $c>0$, whose existence is theoretically guaranteed (see Theorem 3.2 in \cite{jain2015computing}), leading to the absence of a closed formula for the step--size in practice. Although strategies of sufficient descent as the standard Armijo--rule with a backtracking strategy can be used, this could cause the SD algorithm performs many matrix multiplications per iteration, which is not desired to design an efficient method.\\

Recently, Gawlik in \cite{gawlik2019zolotarev} introduced the Zolotarev iterations for finding the matrix square root, this approach is based on a recursion for rational approximations of $\sqrt{a}$, which is extended to the matrix case. This procedure is similar to the Pad\`{e}'s iterations \cite{higham1997stable,laszkiewicz2009pade}, but converges more rapidly to the solution for matrices that have eigenvalues with widely varying magnitudes.\\

In this paper, we introduce two new fixed point methods to compute a numerical solution of problem \eqref{problem}. Based on the Sra' iteration \eqref{Sra} and keeping in mind the high computational cost per iteration of \eqref{Sra}, we propose some fixed point methods equipped with a scaling parameter, which only need to compute one inverse matrix and at most two matrix multiplications per iteration. In addition, we establish a global convergence result under the Thompson metric, following the idea of the Sra' demonstration in \cite{Sra}. Furthermore, we perform some numerical comparisons of our proposals with other state--of--the--art methods in order to demonstrate the effectiveness and efficiency of our procedures. Several numerical experiments show that our proposals are more efficient than the Sra iteration and also converge faster to the solution of \eqref{problem}.\\

The rest of this paper is organized as follows. In section \ref{sec:2} we introduce our two fixed point iterations for solving the square root problem \eqref{problem}. A convergence analysis is given in section \ref{sec:3}. Some numerical tests on several experimental problems are presented in section \ref{sec:4}. Finally, conclusions are drawn in section \ref{sec:5}.

\section{Two fixed point methods}
\label{sec:2}
In this section, we introduce two fixed point methods to deal with the numerical solution of the matrix square root problem \eqref{problem}. Motivated by the Sra' iteration and looking for numerical efficiency, we construct new iterative schemes from the matrix equation \eqref{problem}. Let $\mu$ be a positive parameter (conveniently chosen). Adding the term $\mu X$ on both sides of the equation \eqref{problem}, and then pre--multiplying by the inverse matrix $(X+\mu I)^{-1}$, we arrive at
\begin{equation}
  X = (A+\mu X)(X+\mu I)^{-1}, \label{eq2_sec2}
\end{equation}
which leads us to our first fixed point iteration, starting at an initial symmetric positive semi--definite matrix $X_0\in\mathbb{R}^{n\times n}$,
\begin{equation}
  X_{k+1} = (A+\mu X_k)(X_k+\mu I)^{-1}, \quad k=0,1,2,\dots \label{eq3_sec2}
\end{equation}
Following a similar reasoning, we can build another fixed point iteration to address problem \eqref{problem}. From $A = XX$, multiplying this equation by $X^{\top}$, adding $\mu X$ and then multiplying by $(X^{\top}X + \mu I)^{-1}$ we obtain,
\begin{equation}
  X = (X^{\top}X + \mu I)^{-1}(X^{\top}A + \mu X), \label{eq4_sec2}
\end{equation}
which suggests the following fixed point iterative process,
\begin{equation}
  X_{k+1} = (X_k^{\top}X_k + \mu I)^{-1}(X_k^{\top}A + \mu X_k), \quad k=0,1,2,\dots \label{eq5_sec2}
\end{equation}
From equations \eqref{eq3_sec2} and \eqref{eq5_sec2}, we note that our approaches are computationally less costly than the Sra' iteration due to our iterative procedures only require to compute a matrix inverse per iteration, while the algorithm \eqref{Sra} needs three. In addition, our proposals
incorporate a scale parameter that, if its properly selected, can speed up the convergence. Furthermore, comparing both iterative processes \eqref{eq3_sec2} and \eqref{eq5_sec2}, it is clear that each iteration of our first method is computationally more efficient than our scheme \eqref{eq5_sec2}, since it requires fewer matrix multiplications. Now we describe our efficient fixed point iterative algorithm.

\begin{algorithm}[H]
\begin{algorithmic}[1]
\REQUIRE $A\in\mathbb{R}^{n\times n}$ a given positive definite matrix, $X_0\in \mathbb{R}^{n\times n}$, $\mu>0$, $\epsilon\in(0,1)$, $k = 0$.
\ENSURE An $\epsilon$--approximate solution of the system of equations \eqref{problem}
\WHILE{ $|| A - X_k^{2} ||_F > \epsilon$ }
\STATE $X_{k+1} = (A+\mu X_k)(X_k+\mu I)^{-1}$;
\STATE $k = k +1$;
\ENDWHILE
\STATE $X^{*} = X_{k}.$
\end{algorithmic}
\caption{Fixed Point Method (FPM)}\label{alg1}
\end{algorithm}

\begin{remark}
 By changing line 2 of Algorithm \ref{alg1} for the update formula \eqref{eq5_sec2} we get our second fixed point iterative method. In this work, we will analyze this second variant only from a numerical point of view.
\end{remark}

\section{Convergence analysis}
\label{sec:3}
We now analyze Algorithm \ref{alg1} by revealing the behaviour of the residual $\delta_T(X_k,X_{*})$, where $\delta_T$ denotes the Thompson metric and $X^{*}$ is the solution of \eqref{problem}. The theoretical results provided here use similar tools than \cite{Sra}. Specifically, at the final of this section, we prove that our scheme \eqref{eq3_sec2} is a fixed--point iteration under the Thompson part metric defined by
\begin{equation}
\delta_T(X,Y)= ||\log(X^{-\frac12}YX^{-\frac12})||_2, \label{2.9}
\end{equation}
where $||\cdot||_2$ is the usual matrix norm and ``$\log$'' denotes the matrix logarithm. In the rest of this article, we will denote by $\lambda_{\textrm{min}}(M)$ and $\lambda_{\textrm{max}}(M)$ the smallest and largest eigenvalues of the symmetric matrix $M\in\mathbb{R}^{n\times n}$ respectively.\\

The following Lemma provides us some remarkable properties associated to the Thompson metric. For details about Lemma \ref{Lemma1} please see \cite{lee2008invariant,lemmens2012nonlinear,lim2012matrix,sra2015conic}.
\begin{lemma}\label{Lemma1}[Proposition 4.2 in \cite{sra2015conic}]
  Consider the Thompson metric defined in \eqref{2.9}. Let $A,B,X,Y \in \mathbb{R}^{n\times n}$ be symmetric positive definite matrices, then
  \begin{equation}\delta_T(X^{-1},Y^{-1}) =  \delta_T(X,Y), \label{9} \end{equation}
    \item \begin{equation}\delta_T(X+A,Y+B)  \leq  \max\{\delta_T(X,Y),\delta_T(A,B) \}, \label{10}\end{equation}
    and
    \item \begin{equation}\delta_T(X+A,Y+A)  \leq  \left(\frac{\alpha}{\alpha+\lambda_{\min}(A)}\right) \delta_T(X,Y), \label{11} \end{equation}
    where  $\alpha=\max\{||X||_2,||Y||_2 \}$.
\end{lemma}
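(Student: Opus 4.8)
The plan is to reduce every statement to the Loewner order via the elementary reformulation
\[ \delta_T(X,Y)\le s \quad\Longleftrightarrow\quad e^{-s}Y \preceq X \preceq e^{s}Y, \]
which follows directly from the definition \eqref{2.9}: the eigenvalues $\mu$ of $X^{-\frac12}YX^{-\frac12}$ all satisfy $|\log\mu|\le s$ iff $e^{-s}I \preceq X^{-\frac12}YX^{-\frac12}\preceq e^{s}I$, and congruence by $X^{\frac12}$ turns this into the displayed sandwich. Thus with $s:=\delta_T(X,Y)$ one always has $e^{-s}Y\preceq X\preceq e^{s}Y$, and this is essentially the only fact about $\delta_T$ I will use.

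For \eqref{9}, I would use that inversion reverses the Loewner order on positive definite matrices: applying $(\cdot)^{-1}$ to $e^{-s}Y\preceq X\preceq e^{s}Y$ gives $e^{-s}X^{-1}\preceq Y^{-1}\preceq e^{s}X^{-1}$, i.e. $\delta_T(X^{-1},Y^{-1})\le s=\delta_T(X,Y)$; the reverse inequality follows by symmetry (replace $X,Y$ by $X^{-1},Y^{-1}$), giving equality. For \eqref{10}, set $s=\delta_T(X,Y)$, $t=\delta_T(A,B)$ and $r=\max\{s,t\}$. Then $e^{-r}Y\preceq X\preceq e^{r}Y$ and $e^{-r}B\preceq A\preceq e^{r}B$; adding these two chains of inequalities yields $e^{-r}(Y+B)\preceq X+A\preceq e^{r}(Y+B)$, which is exactly $\delta_T(X+A,Y+B)\le r$.

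The substantive part is \eqref{11}. Write $s=\delta_T(X,Y)$, $\lambda=\lambda_{\min}(A)$, $\alpha=\max\{\|X\|_2,\|Y\|_2\}$ and $t=\frac{\alpha}{\alpha+\lambda}\,s$; it suffices to prove the one-sided bound $X+A\preceq e^{t}(Y+A)$, since the companion bound $Y+A\preceq e^{t}(X+A)$ follows by interchanging $X$ and $Y$ (both $\alpha$ and $s$ are symmetric in $X,Y$), and together they give $\delta_T(X+A,Y+A)\le t$. Rearranging, the target $X+A\preceq e^{t}(Y+A)$ is equivalent to $X\preceq e^{t}Y+(e^{t}-1)A$. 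The key idea---and the step where a naive estimate fails---is not to bound the two terms separately, but to combine the two available upper bounds on $X$, namely $X\preceq e^{s}Y$ and $X\preceq\alpha I$, into the convex combination
\[ X \preceq \theta\,e^{s}Y+(1-\theta)\,\alpha I,\qquad \theta\in[0,1]. \]
Choosing $\theta=e^{t-s}\in(0,1)$ makes the $Y$-coefficient equal to $e^{t}$, so it only remains to check $(1-\theta)\alpha I\preceq (e^{t}-1)A$; since $A\succeq\lambda I$ this reduces to the scalar inequality $(1-e^{\,t-s})\alpha\le(e^{t}-1)\lambda$.

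Finally I would close this scalar inequality with the elementary estimates $1-e^{-u}\le u$ and $e^{v}-1\ge v$. Writing $p=s/(\alpha+\lambda)$ one has $s-t=\lambda p$ and $t=\alpha p$, hence $(1-e^{\,t-s})\alpha=(1-e^{-\lambda p})\alpha\le \alpha\lambda p$ and $(e^{t}-1)\lambda=(e^{\alpha p}-1)\lambda\ge \alpha\lambda p$, which chains to the desired bound. I expect the main obstacle to be precisely this last reduction: the straightforward route of replacing $Y$ by $\alpha I$ directly inside $X\preceq e^{s}Y$ is too lossy (it ignores that $\|Y\|_2$ may lie far below $\alpha$) and yields a contraction factor that is too large. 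The convex-combination device, tuned by the choice $\theta=e^{t-s}$, is exactly what is needed to recover the sharp factor $\alpha/(\alpha+\lambda)$.
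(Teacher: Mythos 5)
Your proof is correct, but it cannot be compared line-by-line against the paper, because the paper does not actually prove this lemma: it states it as Proposition 4.2 of \cite{sra2015conic} and refers the reader to \cite{lee2008invariant,lemmens2012nonlinear,lim2012matrix,sra2015conic} for details. What you have supplied is therefore a self-contained replacement for that citation, and it checks out. Your single tool, the order-theoretic characterization $\delta_T(X,Y)\le s \Leftrightarrow e^{-s}Y\preceq X\preceq e^{s}Y$, is exactly the device used in the cited literature (it is the $M(X/Y)$-gauge formulation of the Thompson metric), and your treatments of \eqref{9} (antitonicity of inversion plus symmetry) and \eqref{10} (adding two Loewner sandwiches) are the standard short arguments. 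The part worth scrutiny is \eqref{11}, and there your reasoning is sound: the rearrangement to $X\preceq e^{t}Y+(e^{t}-1)A$, the convex combination $X\preceq \theta e^{s}Y+(1-\theta)\alpha I$ with $\theta=e^{t-s}$, and the scalar closure via $1-e^{-u}\le u$, $e^{v}-1\ge v$ with $p=s/(\alpha+\lambda)$ all verify correctly, since $(1-e^{-\lambda p})\alpha\le\alpha\lambda p\le(e^{\alpha p}-1)\lambda$. Your side remark is also accurate and is the genuine insight here: the naive substitution $Y\preceq\alpha I$ inside $X\preceq e^{s}Y$ only yields the weaker bound $\log\bigl(\frac{\alpha e^{s}+\lambda}{\alpha+\lambda}\bigr)$, which by the weighted AM--GM inequality sits \emph{above} the claimed rate $\frac{\alpha}{\alpha+\lambda}s$, so the convex-combination tuning is really needed to obtain the stated constant. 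The only cosmetic slip is in the proof of \eqref{9}: inverting the chain $e^{-s}Y\preceq X\preceq e^{s}Y$ literally gives $e^{-s}Y^{-1}\preceq X^{-1}\preceq e^{s}Y^{-1}$ rather than the form you wrote, but the two sandwiches are equivalent, so nothing is affected.
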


Proposition \ref{Prop1} establishes another property of the Thompson metric which is fundamental to demonstrate the global convergence of our Algorithm \ref{alg1}.
\begin{proposition}\label{Prop1}
  Consider the Thompson metric defined in \eqref{2.9}. Let $A,B,X,Y \in \mathbb{R}^{n\times n}$ be symmetric positive definite matrices, then
  \begin{equation}
    \delta_T(XA^{-1},YA^{-1})=\delta_T(X,Y). \label{12}
  \end{equation}
\end{proposition}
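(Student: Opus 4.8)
The plan is to reduce everything to the spectral characterization of the Thompson metric. Although the definition \eqref{2.9} is written in terms of the symmetric factor $X^{-\frac12}YX^{-\frac12}$, the essential observation is that this matrix is similar to $X^{-1}Y$ (via conjugation by $X^{\frac12}$), so the two share the same spectrum. Since the spectral norm of the logarithm of a matrix with positive real eigenvalues depends only on those eigenvalues, one has
\begin{equation*}
\delta_T(X,Y) = \max_{i} |\log \lambda_i(X^{-1}Y)|,
\end{equation*}
where $\lambda_1,\dots,\lambda_n$ denote the (positive, real) eigenvalues of $X^{-1}Y$. I would first establish this identity carefully, noting that $X^{-1}Y = X^{-\frac12}(X^{-\frac12}YX^{-\frac12})X^{\frac12}$ is similar to a symmetric positive definite matrix and hence indeed has positive real spectrum.

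With this reformulation in hand, the next step is to apply it to the left-hand side of \eqref{12}. The products $XA^{-1}$ and $YA^{-1}$ are not symmetric, but each is similar to a symmetric positive definite matrix: indeed $XA^{-1} = A^{\frac12}(A^{-\frac12}XA^{-\frac12})A^{-\frac12}$, so $XA^{-1}$ is conjugate to $A^{-\frac12}XA^{-\frac12}\succ 0$, and likewise for $YA^{-1}$. Hence both have positive real spectra and the spectral formula above applies verbatim. Writing $P = XA^{-1}$ and $Q = YA^{-1}$, I would compute
\begin{equation*}
P^{-1}Q = (XA^{-1})^{-1}(YA^{-1}) = A X^{-1} Y A^{-1},
\end{equation*}
and observe that this is a similarity transform of $X^{-1}Y$ by $A$. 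Consequently $AX^{-1}YA^{-1}$ and $X^{-1}Y$ have identical eigenvalues, and substitution into the spectral formula yields
\begin{equation*}
\delta_T(XA^{-1},YA^{-1}) = \max_i |\log\lambda_i(AX^{-1}YA^{-1})| = \max_i |\log\lambda_i(X^{-1}Y)| = \delta_T(X,Y),
\end{equation*}
which is exactly \eqref{12}.

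The main obstacle I anticipate is one of well-definedness rather than of difficulty: the expression $\delta_T(XA^{-1},YA^{-1})$ is formed from non-symmetric matrices, so one must justify that \eqref{2.9}, or its spectral surrogate, still makes sense there. This is handled by the similarity remark above, which guarantees positive real spectra and lets us interpret $\delta_T$ through its eigenvalue formula. An alternative route that sidesteps this point is to use congruence invariance directly: since $XA^{-1}$ and $YA^{-1}$ are simultaneously conjugate to $A^{-\frac12}XA^{-\frac12}$ and $A^{-\frac12}YA^{-\frac12}$ by the same map, and the Thompson metric is invariant under congruence by $M = A^{-\frac12}$, one obtains $\delta_T(XA^{-1},YA^{-1}) = \delta_T(A^{-\frac12}XA^{-\frac12}, A^{-\frac12}YA^{-\frac12}) = \delta_T(X,Y)$. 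Either way the argument hinges on the fact that $\delta_T$ depends on its arguments only through the spectrum of $X^{-1}Y$, the same fact that underlies the inversion invariance \eqref{9} already recorded in Lemma \ref{Lemma1}.
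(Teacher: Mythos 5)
Your proof is correct and follows essentially the same route as the paper's: both reduce the Thompson metric to the spectrum of $X^{-1}Y$ (the paper via $\max\{\log\lambda_{\max}(X^{-1}Y),\log\lambda_{\max}(Y^{-1}X)\}$, you via $\max_i|\log\lambda_i(X^{-1}Y)|$, which are the same characterization) and then observe that $(XA^{-1})^{-1}(YA^{-1}) = AX^{-1}YA^{-1}$ is similar to $X^{-1}Y$, so the relevant eigenvalues are unchanged. If anything, your explicit justification that $\delta_T$ still makes sense for the non-symmetric products $XA^{-1}$, $YA^{-1}$ addresses a point the paper leaves implicit.
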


\begin{proof}
  To see the this property, observe first that
  \begin{equation}\label{12a}
    \lambda_{\max}(AB^{-1})=\lambda_{\max}(B^{-1}A) = \lambda_{\textrm{max}}(B^{-\frac12}AB^{-\frac12}),
  \end{equation}

In fact, take first an eigenpair $(\lambda,x)$ of $AB^{-1}$, and $(\gamma,w)$
eigenpair of $B^{-1}A$. Then, for $v=B^{-\frac12}x$ we have
$$
AB^{-1}x=\lambda x \Leftrightarrow AB^{-\frac12}v=\lambda B^\frac12v \Leftrightarrow B^{-\frac12}AB^{-\frac12}v=\lambda v,
$$
which proves that $\lambda$ is an eigenvalue of $B^{-\frac12}AB^{-\frac12}$.\\

Similarly, for $y=B^{\frac12}w$, we have
$$
B^{-1}Aw=\gamma w \Leftrightarrow B^{-\frac12}AB^{-\frac12}y=\gamma y,
$$
obtaining that $\gamma$ is an eigenvalue for $B^{-\frac12}AB^{-\frac12}$.\\

Taken maximum on the Rayleigh quotient we obtain our claim.\\

On the other hand, to prove \eqref{12} observe that
$$
\lambda_{\textrm{max}}((XA^{-1})^{-1}YA^{-1})=\lambda_{\textrm{max}}(AX^{-1}YA^{-1})=\lambda_{\textrm{max}}(A^{-1}AX^{-1}Y)=\lambda_{\textrm{max}}(X^{-1}Y).
$$
In the second equality, we use the relation \eqref{12a}. Analogously we can prove
$$
\lambda_{\textrm{max}}((YA^{-1})^{-1}XA^{-1})=\lambda_{\textrm{max}}(Y^{-1}X).
$$
Then
\begin{eqnarray}
\delta_T(XA^{-1},YA^{-1}) & = & \max\{\log\lambda_{\textrm{max}}((XA^{-1})^{-1}YA^{-1}),\log\lambda_{\textrm{max}}((YA^{-1})^{-1}XA^{-1}) \} \nonumber \\
  & = & \max\{\log\lambda_{\textrm{max}}(X^{-1}Y),\log\lambda_{\textrm{max}}(Y^{-1}X) \} \nonumber \\
  & = & \delta_T(X,Y),   \nonumber
\end{eqnarray}
which completes the proof.
\end{proof}

Now consider the positive semi--definite matrix interval $\mathcal{I}=[2A(A+I)^{-1},\frac12(A+I)]$ and the
mapping $\mathcal{G}\equiv X\rightarrow (\mu X+A)(X+\mu I)^{-1}.$ We claim that
$\mathcal{G}$ maps the interval $\mathcal{I}$ to itself. In fact, if $X\in\mathcal{I}$ we have
$$
0 \prec 2A(A+\sigma_1I)(A+I)^{-1}(A+\sigma_1I)^{-1} \preceq \mathcal{G}(X) \preceq \frac12(\sigma_2A+I)(A+I)(\sigma_2A+I)^{-1},
$$
where $\sigma_1 = 1+2\mu$ and $\sigma_2 = 2+\mu$, and since $A\succ 0$ then we obtain our claim. Now we are ready to show the global convergence result for our Algorithm \ref{alg1}.

\begin{theorem}
Let $\{X_k\}_{k\geq 0}$ be the sequence generated by Algorithm \ref{alg1}, $\mu>0$ and $X^*=A^{1/2}$
be the exact solution of \eqref{problem}. Then there exist a positive constant
$\gamma\in(0,1)$ such that
$$
\delta_T(X_k,X^*)\leq \gamma^k\delta_T(X_0,X^*).
$$	
Moreover, $$\lim_{k\rightarrow\infty} X_k = X^*.$$
\end{theorem}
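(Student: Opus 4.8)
The plan is to show that the fixed-point map $\mathcal{G}(X) = (\mu X + A)(X + \mu I)^{-1}$ is a contraction in the Thompson metric $\delta_T$, with contraction factor $\gamma \in (0,1)$, and then invoke the Banach fixed-point principle. First I would observe that $X^* = A^{1/2}$ is indeed a fixed point of $\mathcal{G}$: substituting $X = A^{1/2}$ gives $(\mu A^{1/2} + A)(A^{1/2} + \mu I)^{-1} = A^{1/2}(\mu I + A^{1/2})(A^{1/2} + \mu I)^{-1} = A^{1/2}$, using that $A^{1/2}$ and $I$ commute. Thus the iteration error can be written as $\delta_T(X_{k+1}, X^*) = \delta_T(\mathcal{G}(X_k), \mathcal{G}(X^*))$, and the whole estimate reduces to bounding $\delta_T(\mathcal{G}(X), \mathcal{G}(Y))$ for $X, Y$ in the invariant interval $\mathcal{I} = [2A(A+I)^{-1}, \frac{1}{2}(A+I)]$ established just before the statement.

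The central computation is to decompose $\mathcal{G}$ into pieces on which the lemma and proposition apply. I would write $\mathcal{G}(X) = (\mu X + A)(X + \mu I)^{-1}$ and compare $\mathcal{G}(X)$ with $\mathcal{G}(Y)$ step by step. Using Proposition \ref{Prop1}, the right-multiplication by an inverse can be handled, but here the inverse factor $(X + \mu I)^{-1}$ itself depends on the argument, so the clean move is to use the invariance property $\delta_T(P^{-1}, Q^{-1}) = \delta_T(P, Q)$ from \eqref{9} together with a congruence-type rewriting. Concretely, I expect to rewrite $\mathcal{G}(X)$ in a form such as $\mu I + (A - \mu^2 I)(X + \mu I)^{-1}$ (obtained by polynomial division of $\mu X + A$ by $X + \mu I$), which isolates the $X$-dependence into a single inverted affine term $(X + \mu I)^{-1}$. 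Then $\delta_T(\mathcal{G}(X), \mathcal{G}(Y))$ becomes an estimate on $\delta_T\big(\mu I + C(X+\mu I)^{-1},\, \mu I + C(Y+\mu I)^{-1}\big)$ with $C = A - \mu^2 I$, to which the translation-contraction inequality \eqref{11} can be applied with $A$ there playing the role of $\mu I$, followed by the inversion invariance \eqref{9} to pass from $(X+\mu I)^{-1}$ back to $X + \mu I$ and then to $X$.

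Carrying this out, inequality \eqref{11} yields a factor of the form $\alpha/(\alpha + \mu)$, where $\alpha = \max\{\|C(X+\mu I)^{-1}\|_2, \|C(Y+\mu I)^{-1}\|_2\}$, and since $X, Y$ range over the compact interval $\mathcal{I}$, this $\alpha$ is bounded above by a constant depending only on $A$ and $\mu$. Setting
\begin{equation}
\gamma = \frac{\alpha}{\alpha + \mu} < 1 \nonumber
\end{equation}
gives the one-step contraction $\delta_T(X_{k+1}, X^*) \leq \gamma\, \delta_T(X_k, X^*)$, and iterating downward produces $\delta_T(X_k, X^*) \leq \gamma^k \delta_T(X_0, X^*)$. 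The final limit claim then follows because $\gamma^k \to 0$ forces $\delta_T(X_k, X^*) \to 0$, and convergence in the Thompson metric implies convergence of $X_k$ to $X^*$ in the usual matrix topology.

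The main obstacle I anticipate is the algebraic rewriting of $\mathcal{G}$ so that the Thompson-metric lemmas apply cleanly: the map mixes multiplication on the right by an argument-dependent inverse with an additive term, so I must verify that all intermediate matrices ($X + \mu I$, $A - \mu^2 I$, and the products) are symmetric positive definite on $\mathcal{I}$ — otherwise the metric $\delta_T$ and the cited inequalities are not even defined. In particular the sign of $A - \mu^2 I$ is delicate and may require splitting into cases or a symmetrization argument, and ensuring that $\alpha/(\alpha+\mu)$ is uniformly bounded below $1$ over the whole interval $\mathcal{I}$ (rather than merely at a single point) is the crux of obtaining a genuine contraction constant independent of $k$.
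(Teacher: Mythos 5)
Your overall strategy (contraction of $\mathcal{G}$ in the Thompson metric on the invariant interval $\mathcal{I}$, then Banach) is the same as the paper's, but your central algebraic step has a genuine gap that the paper's decomposition is specifically designed to avoid. You rewrite $\mathcal{G}(X) = \mu I + (A - \mu^2 I)(X+\mu I)^{-1}$, and then apply the inversion invariance \eqref{9}, the translation inequality \eqref{11}, and Proposition \ref{Prop1} to the matrix $C = A - \mu^2 I$. All three of these tools are stated (and are only valid) for symmetric \emph{positive definite} matrices: $\delta_T$ itself is undefined off the positive cone, Proposition \ref{Prop1} right-multiplies by $C^{-1}$ of a positive definite $C$, and \eqref{11} requires the translated summands to be positive definite. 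But the theorem asserts convergence for \emph{every} $\mu > 0$, whereas $C = A - \mu^2 I \succ 0$ holds only when $\mu < \sqrt{\lambda_{\min}(A)}$. For $\mu^2 \geq \lambda_{\min}(A)$ the matrix $C$ is singular, indefinite, or negative definite, $\mathcal{G}(X) = \mu I - (\mu^2 I - A)(X+\mu I)^{-1}$ becomes a \emph{difference}, and none of the cited inequalities apply; there is no subtraction analogue of \eqref{11} in the toolkit, so the case-splitting or ``symmetrization'' you gesture at does not exist within the lemmas available. You correctly flagged this as the delicate point, but flagging it does not close it: as written, your argument proves the theorem only for a restricted range of $\mu$, which is strictly weaker than the statement.

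The paper sidesteps this entirely by a different decomposition: it splits the numerator additively, $\mathcal{G}(X) = \mu X(X+\mu I)^{-1} + A(X+\mu I)^{-1}$, and applies the max-type inequality \eqref{10} to reduce the problem to two separate estimates, namely $\delta_T\bigl(\mu X(X+\mu I)^{-1}, \mu Y(Y+\mu I)^{-1}\bigr)$ and $\delta_T\bigl(A(X+\mu I)^{-1}, A(Y+\mu I)^{-1}\bigr)$. Every block appearing there ($X$, $Y$, $A$, $\mu I$, and their sums, inverses, and products) is built without taking differences, so positivity never fails regardless of $\mu$. The first block is handled by \eqref{9} and \eqref{11} (giving the factor $\alpha_1/(\alpha_1 + \mu^{-1})$), the second by \eqref{9}, \eqref{11} and Proposition \ref{Prop1} (giving $\alpha_2/(\alpha_2 + \mu\kappa(A))$), and compactness of $\mathcal{I}$ makes $\gamma = \max$ of the two factors uniform, exactly as you do at the end. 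If you want to salvage your partial-fraction route, you would have to either restrict the theorem to $\mu < \sqrt{\lambda_{\min}(A)}$ or supply a new Thompson-metric estimate valid for translations by non-definite matrices; the cleaner repair is simply to adopt the paper's additive splitting together with \eqref{10}.
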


\begin{proof}
Consider the nonlinear map $\mathcal{G}:\mathcal{I}\rightarrow\mathcal{I}$ previously defined
and take arbitrary pair $X,Y\in \mathcal{I}$. Then using property \eqref{10} we obtain

\begin{eqnarray}
\delta_T(\mathcal{G}(X),\mathcal{G}(Y)) & = & \delta_T[(\mu X+A)(X+\mu I)^{-1},(\mu Y+A)(Y+\mu I)^{-1}] \nonumber \\
& = & \delta_T[\mu X(X+\mu I)^{-1}+A(X+\mu I)^{-1},\mu Y(Y+\mu I)^{-1}+A(Y+\mu I)^{-1}] \nonumber \\
& \leq & \max\{\delta_T(\mu X(X+\mu I)^{-1},\mu Y(Y+\mu I)^{-1}),\delta_T(A(X+\mu I)^{-1},A(Y+\mu I)^{-1} )\}. \nonumber
\end{eqnarray}

Now let us establish bounds on each  argument of the maximum: for the first one, we
use properties \eqref{9} (twice) and \eqref{11}:

\begin{eqnarray}
\delta_T(\mu X(X+\mu I)^{-1},\mu Y(Y+\mu I)^{-1}) & = & \delta_T\left(\frac1{\mu} (X+\mu I)X^{-1},\frac1{\mu} (Y+\mu I)Y^{-1}\right) \nonumber \\
& = & \delta_T\left(\frac1\mu I+X^{-1},\frac1\mu I+Y^{-1}\right) \nonumber \\
& \leq & \left( \frac{\alpha_1}{\alpha_1+\mu^{-1}} \right)\delta_T(X^{-1},Y^{-1}) \nonumber \\
& = & \left( \frac{\alpha_1}{\alpha_1+\mu^{-1}} \right)\delta_T(X,Y),  \nonumber
\end{eqnarray}
where $\alpha_1=\max\{ ||X^{-1}||_2,||Y^{-1}||_2\}.$\\

Similarly, by using properties \eqref{9}, \eqref{11} and
\eqref{12}, the second argument becomes

\begin{eqnarray}
\delta_T(A(X+\mu I)^{-1},A(Y+\mu I)^{-1}) & = & \delta_T( (X+\mu I)A^{-1},(Y+\mu I)A^{-1}) \nonumber \\
& = & \delta_T(XA^{-1}+\mu A^{-1},YA^{-1}+\mu A^{-1}) \nonumber \\
& \leq & \left( \frac{\bar\alpha_2}{\bar\alpha_2+\mu\lambda_{\textrm{min}}(A^{-1})} \right)\delta_T(XA^{-1},YA^{-1}) \nonumber \\
& = & \left( \frac{\bar\alpha_2}{\bar\alpha_2+\mu\lambda_{\textrm{max}}(A)} \right)\delta_T(X,Y), \nonumber
\end{eqnarray}
where $\bar\alpha_2=\max{||XA^{-1}||_2,||YA^{-1}||_2}$.
Let us denote $\alpha_2=\max\{||X||_2,||Y||_2\}$. Then $\bar\alpha_2\leq \alpha_2\lambda_{\textrm{min}}(A)$. Since the function $h(\alpha) = \alpha/(\alpha+c)$ is increasing, we obtain
$$\delta_T(A(X+\mu I)^{-1},A(Y+\mu I)^{-1})\leq \left(\frac{\alpha_2}{\alpha_2+\mu\kappa(A)}\right)\delta_T(X,Y),$$
where $\kappa(A)$ denotes the condition number of $A$, i.e. $\kappa(A) = \lambda_{\textrm{max}}(A)/\lambda_{\textrm{min}}(A)$.\\

Merging these two expressions in the above maximum, we arrive at
$$ \delta_T(\mathcal{G}(X),\mathcal{G}(Y)) \leq \gamma\delta_T(X,Y), $$
where $\gamma = \max\{\frac{\alpha_1}{\alpha_1+\mu^{-1}},\frac{\alpha_2}{\alpha_2+\mu\kappa(A)} \} < 1$.
Since the positive definite interval $\mathcal{I}$ is a compact set, we can choose $\gamma$ independently
from $X$ and $Y$. Specifically, since $\alpha_1\leq||\frac12(I+A^{-1})||_2$ and $\alpha_2\leq||\frac12(I+A)||_2$ then

$$\gamma=\max\left\{ \frac{1+||A^{-1}||_2}{1+||A^{-1}||_2+\mu^{-1}}, \frac{1+||A||_2}{1+||A||_2+\mu\kappa(A)} \right\} < 1,$$
which is strictly less than one for definite positive matrix $A$.
Thus, the map $\mathcal{G}$ is a strict contraction. Hence, from Banach contraction theorem it follows
that $\delta_T(X_k,X^{*})$ converges at linear rate given by $\gamma$, and $X_k\rightarrow X^*$.
\end{proof}

\begin{remark}
  If we choose the parameter as $\mu=\sqrt{\frac{1+||A||_2}{(1+||A^{-1}||_2)\kappa(A)}}$ to balance the arguments of the maximum defining $\gamma$ then we construct a theoretically optimal  convergence rate.
\end{remark}

\section{Numerical experiments}
\label{sec:4}
In this section, we report some numerical results associated to the two variants of our Algorithm \ref{alg1} FPM1 and FPM2 (the iterative schemes \eqref{eq3_sec2}--\eqref{eq5_sec2} respectively) and compare with some existing methods on the literature such as the Sra's iteration \eqref{Sra}, the scaled Newton method (given by \eqref{ScaledNewton} with $\mu_0 = 0.5$, and the gradient method (GradM) proposed in \cite{jain2015computing} with Barzilai--Borwein \cite{cruz2003nonmonotone} step--size corrected with Armijo line search, in order to demonstrate the effectiveness of our proposal on three different experiments. All test problems were performed on a intel(R) CORE(TM) i7--4770, CPU 3.40 GHz with 500GB HD and 16GB RAM, and all methods were implemented in Matlab.\\

In all experiments, presented in this section, in addition to checking the residual norm $E_k = ||A-X_k^{2}||_F$, we also compute the relative change of the two consecutive iterates
\begin{equation}
  reschg_k = \frac{||X_{k+1} - X_k||_F}{||X_k||_F}. \label{eq1_sec4}
\end{equation}

We let all algorithms run up to $N$ iterations and stop them at iteration $k < N$ if $E_k < \epsilon$, or $reschg_k < \epsilon_{X}$. We use the default values $N = 1000$, $\epsilon = 1$e-5 and $\epsilon_{X} = 1$e-6. Furthermore, for our procedures FPM1 and FPM2, we set $\mu = \nu\sqrt{ n + \kappa(A)}$  and $\mu = \nu\left(\frac{tr(A)}{n} + \frac{||A||_F}{n}\right)$ with $\nu\in(0,1)$ respectively, where $tr(A)$ denotes the trace of $A$. In addition, for all experiments and for all methods, we use the starting point $X_0 = (1/2)(A + I)$.

\subsection{Randomly generated symmetric positive definite problems}
In this subsection, we test the performance of all methods on problems of the form \eqref{problem} with $A\in\mathbb{R}^{n\times n}$ generated as follow, $A = QDQ^{\top}$ where
$$ Q = (I - 2w_1w_1^{\top})(I - 2w_2w_2^{\top})(I - 2w_3w_3^{\top}),$$
where $w_1$, $w_2$ and $w_3$ are three $n$-dimensional vector randomly generated in the unitary sphere, and $D\in\mathbb{R}^{n\times n}$  is a diagonal matrix $D = diag(\lambda_1,\lambda_2,\ldots,\lambda_n)$ whose $i$--th eigenvalue is defined by
$$ \log(\lambda_i) = \left(\frac{i-n}{n-1}\right)ncond. $$
The parameter \emph{ncond} in the above equality specifies the condition number of
$A$. Note that in such kind of problems the logarithms of the eigenvalues (and not the eigenvalues) are uniformly distributed, leading to problems which are typically harder to solve. In addition, observe that the optimal solution is $A^{1/2} = QD^{1/2}Q^{\top}$, due to $Q$ is an orthogonal matrix.\\

To illustrate the behaviour of the five methods, we show in Figure \ref{fig1} the residual norm $E_k$ along the iteration for a randomly generated problem with $n = 100$ and $ncond = 6$. In this Figure, we observe that the faster procedure is the Newton's method, which is expected due to its quadratic convergence behaviour. We also see that our FPM1 reduces the residual norm $E_k$ more quickly than the Sra's iteration. In addition, we note that the gradient method it produces a very slow decrease in the residual norm but the estimated solution is far from the $A^{1/2}$.\\

\begin{figure}
  \centering
  \includegraphics[width=8cm]{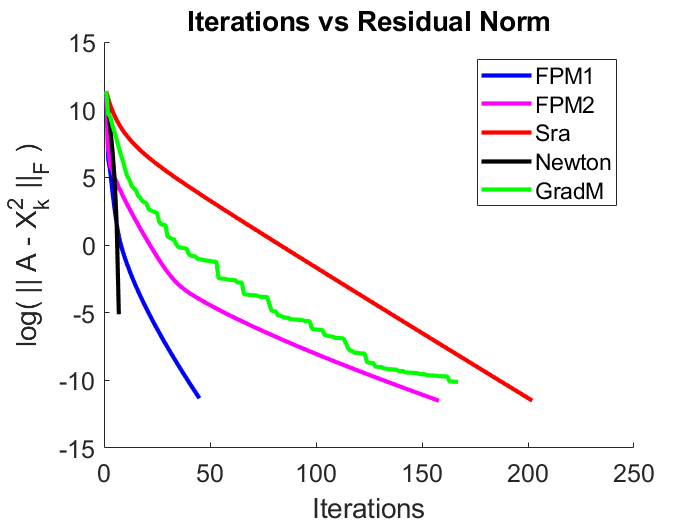}
  \caption{Behavior of the algorithms for $n=100$, $ncond = 6$. The y-axis is on a logarithmic scale.}
  \label{fig1}
\end{figure}

Table \ref{tab:1} contains the numerical results associated to this experiment for three different values of $n=100,500,1000$ and varying $ncond=1,3,5,10$. For each pair $(n,ncond)$, we generate ten independent problems and the we report the average number of iterations (Nitr), the average CPU time in seconds and the average of the residual norm (Error), that is, $Error = (1/10)\sum_{i=1}^{10} E(\hat{X}_i)$ where $E(\hat{X}_i) = ||A - \hat{X}_i^{2}||_F$ and $\hat{X}_i$ denotes the estimated solution obtained by the algorithm solving the $i$--th problem.\\

In order to compare the efficiency of the algorithms, we adopt the performance profile \cite{dolan2002benchmarking} introduced by Dolan and More to illustrate the whole performance of the all methods for the 120 problems tested in this subsection.

\begin{table}
\centering
\label{tab:1}
\caption{Numerical results for symmetric positive definite problems.}
\resizebox{15cm}{!}{\begin{tabular}{c|c c c c c| c c c c c}
  \hline
  	& \textbf{FPM1}	&	\textbf{FPM2}	&	\textbf{Sra}	&	\textbf{Newton}	&	\textbf{GradM}	&	\textbf{FPM1}	&	\textbf{FPM2}	&	\textbf{Sra}	&	\textbf{Newton}	&	\textbf{GradM} \\
  \hline
  & \multicolumn{5}{c|}{n = 100, ncond = 1} & \multicolumn{5}{|c}{n = 100, ncond = 3}\\
  \textbf{Nitr}	&22	      &	7	     &	20	     &	4	     &	28	     &	23	     &	25	     &	45	     &	6	     &	72	    \\
  \textbf{Time}	&0.01	  &	0.004	 &	0.02	 &	0.002	 &	0.014	 &	0.012	 &	0.013	 &	0.04	 &	0.003	 &	0.037	 \\
  \textbf{Error}&7.97e-6  &	7.44e-6  &	6.63e-6  &	4.82e-6  &	6.45e-6  &	6.99e-6  &	7.40e-6  &	7.94e-6  &	8.28e-8  &	8.28e-6  \\
  \hline
  & \multicolumn{5}{c|}{n = 100, ncond = 5} & \multicolumn{5}{|c}{n = 100, ncond = 10}\\
  \textbf{Nitr}	&32       &	115	     &	120	     &	8	     &	288	     &	292	     &	$>$2000	&	1716	 &	Fail	&	$>$2000	\\
  \textbf{Time}	&0.02	  &	0.05	 & 	0.1	     &	0.01	 &	0.16	 &	0.15	 &	0.95	&	1.48	 &	Fail	&	1.66	\\
  \textbf{Error}&8.70e-6  &	9.45e-6  &	9.03e-6  &	2.82e-10 &	9.83e-6  &	9.73e-6  &	0.0349	&	9.94e-6  &	Fail	&	0.1042	\\
  \hline
  & \multicolumn{5}{c|}{n = 500, ncond = 1} & \multicolumn{5}{|c}{n = 500, ncond = 3}\\
  \textbf{Nitr}	&53	      &	8	     &	21	     &	5	     &  	31	 &	51	     &	25	     &	47	     &	6	     &	52	\\
  \textbf{Time}	&0.89	  &	0.16	 &	0.61	 &	0.07	 &	0.61	 & 	0.85	 &	0.51	 &	1.36	 &	0.08	 &	1.07	\\
  \textbf{Error}&8.15e-06 &	7.84e-6  &	7.66e-6  &	1.49e-12 &	5.35e-6  &	9.09e-6  &	8.42e-6  &	7.90e-6  &	1.62e-7  &	8.35e-6	\\
  \hline
  & \multicolumn{5}{c|}{n = 500, ncond = 5} & \multicolumn{5}{|c}{n = 500, ncond = 10}\\
  \textbf{Nitr}	&56	      &	114	     &	124	     &	8	     &	324	     &	317	     &	$>$2000	 &	1747	 &	Fail	&	$>$2000	\\
  \textbf{Time}	&0.93	  &	2.29	 &	3.52	 &	0.11	 &	7.23	 &	5.56	 &	40.59	 &	51.31	 &	Fail	&	69.28	\\
  \textbf{Error}&8.61e-6  &	9.82e-6  &	9.18e-6  &	7.70-11  &	9.79e-6  &	9.96e-6  &	2.69e+4  &	9.95e-6  &	Fail	&	22.2072	\\
  \hline
  & \multicolumn{5}{c|}{n = 1000, ncond = 1} & \multicolumn{5}{|c}{n = 1000, ncond = 3}\\
  \textbf{Nitr}	&76	      &	9	     &	22	     &	5	     &	31	     &	74	     &	26	     &	48	     &	6	     &	78	\\
  \textbf{Time}	&6.82	  &	0.98	 &	3.51	 &	0.36	 &	3.48	 &	6.84	 &	2.98	 &	7.91	 &	0.45	 &	9.67	\\
  \textbf{Error}&9.84e-6  &	2.82e-6  &	5.68e-6  &	2.09e-12 &	7.56e-6  &	8.71e-6  &	6.80e-6  &	7.69e-6  &	2.26e-7  &	9.70e-6	\\
  \hline
  & \multicolumn{5}{c|}{n = 1000, ncond = 5} & \multicolumn{5}{|c}{n = 1000, ncond = 10}\\
  \textbf{Nitr}	&76	      &	116	     &	126	     &	8	     &	226	     &	331	     &	$>$2000	 &	1768	 &	Fail     &	$>$2000	\\
  \textbf{Time}	&6.95	  &	13.2	 &	20.73	 &	0.61 	 &	30.91	 &	29.94	 &	225.15   &	292.18   &	Fail     &	341.59	\\
  \textbf{Error}&9.91e-6  &	9.56e-6  &	9.31e-6  &	9.16e-11 &	9.47e-6  &	9.83e-6  &	4.51e+4  &	9.96e-6  &	Fail     &	70.8521	\\
\hline
\end{tabular}}
\end{table}

\begin{figure}
  \centering
  \begin{center}
  \subfigure[Performance profile based on the number of iterations]{\includegraphics[width=6cm]{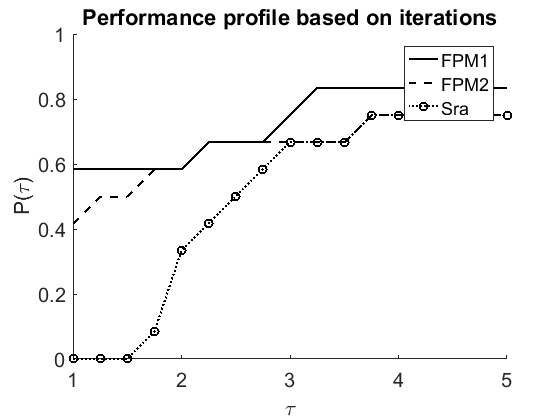}}  \hspace{1.5cm}
  \subfigure[Performance profile based on CPU--time]{\includegraphics[width=6cm]{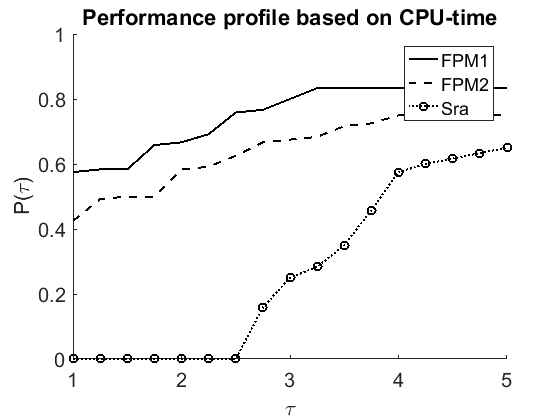}}
  \end{center}
  \caption{Performance profile based on the number of iterations and CPU--time, respectively.}
  \label{fig:1}
\end{figure}

\subsection{Random correlation and low--rank matrices}
In this subsection, we test the all methods on random generated positive definite matrices built with the following matlab command:

\begin{itemize}
  \item Example 1.1: random correlation matrices $A = \verb"gallery"('\verb"randcorr"',n))$.
  \item Example 1.2: $A = \verb"eye"(n) + \beta UU^{\top}$, where $U$ is a low--rank matrix, namely $U=\verb"randn"(n,k)$ with $k=10$, and a variable $\beta = \verb"rand"$.
  \item Example 1.3: the Hilbert matrix $A = \verb"hilb"(n)$,
\end{itemize}

For examples 1.1 and 1.2, we vary $n$ in $\{100,250,500,1000\}$, and compare the average number of iteration, the average CPU time in second and the average error $E(\hat{X}_i) = ||A - \hat{X}_i^2||_F$ obtained by the algorithms on a total of $30$ independent instances for each value of $n$. Note that the random correlation matrices are well--conditioned, the matrices given by  Example 1.2 are moderately well--conditioned while Hilbert's matrix is ill--conditioned. These test experiments were taken from \cite{Sra}.\\

Table \ref{tab:2} reports the numerical results associated to the test examples 1.1--1.2. From this table, we can see that the Newton's method obtains the best results both in CPU--time and in the number of iterations performed. Furthermore, we note that our FPM1 outperforms the other first--order approaches both in terms of iterations and CPU time. In fact, we observe that our FPM1 performs almost the same number of iterations as Newton's method for low--rank type problems. We also note that our second proposal converges very slowly for random correlation matrices, while for test examples 1.2 this procedure is faster than the Sra's iteration.\\

\begin{table}
\centering
\label{tab:2}
\caption{Numerical results for Examples 1.1 and 1.2.}
\resizebox{15cm}{!}{\begin{tabular}{c|c c c c c| c c c c c}
  \hline
  	& \textbf{FPM1}	&	\textbf{FPM2}	&	\textbf{Sra}	&	\textbf{Newton}	&	\textbf{GradM}	&	\textbf{FPM1}	&	\textbf{FPM2}	&	\textbf{Sra}	&	\textbf{Newton}	&	\textbf{GradM} \\
  \hline
  & \multicolumn{5}{c|}{Example 1.1, $n = 100$} & \multicolumn{5}{|c}{Example 1.1, $n = 250$}\\
\textbf{Nitr}	&	26	    & 	212	    &	49	    &	5	    &	100	    &	30	    &	210	    &	56	    &	5	    &	123	    \\
\textbf{Time}	&	0.01	&	0.11	&	0.05	&	0.002	&	0.04	&	0.09	&	0.73	&	0.32	&	0.02	&	0.34    \\
\textbf{Error}	&	8.01e-6	&	9.45e-6	&	9.13e-6	&	1.99e-6	&	7.69e-6	&	8.59e-6	&	9.57e-6	&	9.20e-6	&	2.02e-6	&	7.41e-6	\\
  \hline
  & \multicolumn{5}{c|}{Example 1.1, $n = 500$} & \multicolumn{5}{|c}{Example 1.1, $n = 1000$}\\
\textbf{Nitr}	&	50	    &	467	    &	82	    &	6	    &	203	    &	56 	    &	563	    &	91	    &	6	    &	232	    \\
\textbf{Time}	&	0.84	&	9.37	&	2.30	&	0.08	&	3.18	&	5.15	&	61.54	&	14.74	&	0.47	&	22.08	\\
\textbf{Error}	&	9.24e-6	&	9.86e-6	&	9.46e-6	&	2.17e-6	&	8.21e-6	&	9.39e-6	&	9.92e-6	&	9.64e-6	&	1.54e-6	&	7.47e-6	\\
  \hline
  & \multicolumn{5}{c|}{Example 1.2, $n = 100$} & \multicolumn{5}{|c}{Example 1.2, $n = 250$}\\
\textbf{Nitr}	&	14	    &	26	    &	89	    &	6	    &	19	    &	12	    &	58	    &	135	    &	7	    &	18	\\
\textbf{Time}	&	0.01	&	0.01	&	0.08	&	0.003	&	0.01	&	0.04	&	0.19	&	0.76	&	0.02	&	0.05	\\
\textbf{Error}	&	5.30e-6	&	7.31e-6	&	9.25e-6	&	3.07e-7	&	4.64e-6	&	5.47e-6	&	8.58e-6	&	9.33e-6	&	7.72e-7	&	4.66e-6	\\
  \hline
  & \multicolumn{5}{c|}{Example 1.2, $n = 500$} & \multicolumn{5}{|c}{Example 1.2, $n = 1000$}\\
\textbf{Nitr}	&	13	    &	111	    &	178	    &	7	    &	18	    &	12	    &	698	    &	230	    &	7	    &	17	\\
\textbf{Time}	&	0.21	&	2.26	&	5.00	&	0.10	&	0.29	&	1.06	&	36.59	&	37.02	&	0.57	&	1.65	\\
\textbf{Error}	&	4.95e-6	&	9.34e-6	&	9.36e-6	&	5.48e-7	&	5.35e-6	&	4.50e-6	&	6.21e-6	&	9.54e-6	&	9.28e-7	&	3.51e-6	\\
\hline
\end{tabular}}
\end{table}

\begin{figure}
  \centering
  \begin{center}
  \subfigure[Residual error for random correlation matrices with $n=100$]{\includegraphics[width=6cm]{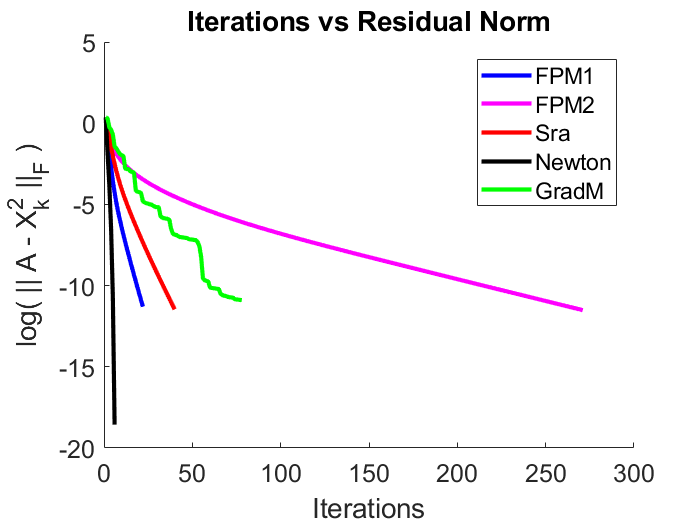}}  \hspace{1.5cm}
  \subfigure[Residual error for low--rank matrices with $n=100$]{\includegraphics[width=6cm]{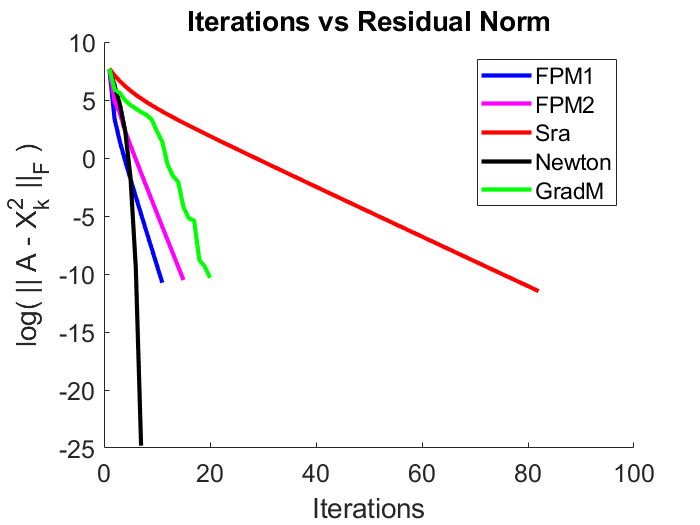}} \hspace{1.5cm}
  \subfigure[Residual error for the Hilbert matrix with $n=50$]{\includegraphics[width=6cm]{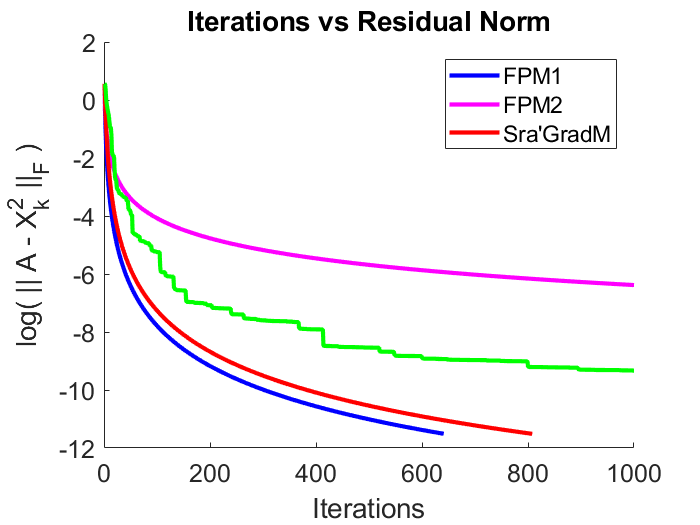}}
  \end{center}
  \caption{Residual error vs the iterations number for all methods. The y-axis is on a logarithmic scale.}
  \label{fig:3}
\end{figure}

The residual norm of the iterates for the three different examples 1.1, 1.2 and 1.3
are shown in Figure \ref{fig:3}. In the subfigure (c) we omit the curve associated with Newton's method because this procedure fails in ill--conditioned problems. Figure \ref{fig:3} shows that our first proposal FPM1 converges faster than the rest of the first--order methods, while the Newton's method is superior to the rest of methods when the matrix $A$ is well--conditioned or moderately well--conditioned. In addition, we observe that in the ill--conditioned situation, our FPM1 method has a very similar behavior to the Sra's iteration, however our proposal makes a smaller number of matrix inversions and therefore is the most efficient method in this case.

\section{Conclusion}
\label{sec:5}
The goal of this paper is to develop an efficient algorithm which is
able to compute the square root of a given symmetric semi--definite positive matrix. Our
strategy is simply to rearrange the nonlinear equation $A - X^2 = 0$ in order to design a contractive mapping which is regulated for an exogenous (conveniently chosen) parameter, leading to effective scaled fixed point methods, which only require calculating one matrix inverse (numerically, solving a linear system of equations) and one matrix product per iteration. We further theoretically show the global convergence of one of our proposed numerical methods. In fact, we demonstrate that this first proposal converges Q--linearly to $A^{1/2}$. The second one is only numerically studied. However, our numerical experiments on randomly generated symmetric positive definite matrices, with different conditioning situations, show that our two procedures are effective and efficient to solve the matrix equation $A - X^2 = 0$. In addition, our numerical tests show that our first proposal (FPM1) outperforms several first--order methods existing in the literature.

\begin{acknowledgements}
This research was supported in part by Conacyt, Mexico (258033 research grant and H.O.L. PhD.
studies scholarship). The second author wants to thank the Federal University of Santa Catarina--Brazil and remarks that his contribution to the present article was predominantly carried out at this institution.
\end{acknowledgements}

\end{document}